\newtheorem{theorem}{Theorem}[section]
\newtheorem{prop}[theorem]{Proposition}
\newtheorem{lemma}[theorem]{Lemma}
\newtheorem{question}[theorem]{Question}
\newtheorem{claim}[theorem]{Claim}
\theoremstyle{definition}
\newcommand{\N}{\mathbb{N}}
\author[Kyle Celano et al.]{Kyle Celano\affiliationmark{1}
  \and Abigail Ollson\affiliationmark{2}
  \and Niraj Velankar\affiliationmark{3} \and Jun Yan\affiliationmark{4}}
\title{An Erd\H{o}s--Szekeres type result for words with repeats}
\affiliation{
  Wake Forest University, Winston-Salem, USA\\
  Keele University, Keele, UK\\
  University of Greifswald, Greifswald, Germany\\
  University of Oxford, Oxford, UK}
\keywords{Combinatorics}
\begin{document}
\publicationdata{vol. 28:1, Permutation Patterns 2025}{2026}{5}{10.46298/dmtcs.16801}{2025-10-28; 2025-10-28; 2026-05-12; 2026-05-13}{2026-05-14}

\maketitle

\begin{abstract}
\vspace{0.25cm}

We prove an Erd\H{o}s--Szekeres type result for finite words over $\mathbb{N}$ with repeated values. Specifically, we define a \emph{repeat} in a word to be an occurrence of a value which is not its first occurrence. We define an occurrence of a \emph{pattern} $\pi$ in a word $w$ to be a (not necessarily consecutive) subword of $w$ that is order isomorphic to $\pi$.  In this note, we show that every word with $kn^6+1$ repeats contains one of the following patterns: $0^{k+2}$, $0011\cdots nn$, $nn\cdots1100$, $012 \cdots n012 \cdots n$, $012 \cdots nn\cdots 210$, $n\cdots 210012\cdots n$, $n\cdots 210n\cdots 210$. Moreover, when $k=1$, we show that this is best possible by constructing a word with $n^6$ repeats that does not contain any of these patterns.
\end{abstract}

\section{Introduction}
\label{sec:in}
Let $\N=\{0,1,2,\dots\}$. A \emph{word} of length $n$ is an element $w=w_1w_2\cdots w_n\in\N^n$. For example, 376885 is a word of length 6. A word $w\in\N^n$ can also be represented on a grid with points at $(i, w_i)$ for every $i \in[n]$, where $[n] = \{1, 2, \dots, n\}$. For example, the word $13043134$ is shown in Figure~\ref{fig: 13043134 word}.

\begin{figure}[h]
  \centering    
  \resizebox{0.5\textwidth}{!}{%
  \begin{tikzpicture}
        \draw[step=1cm,gray,very thin] (0,0) grid (9,5);
        \draw[thick,->] (0,0) -- (9,0);
        \draw[thick,->] (0,0) -- (0,5);
        \foreach \y in {0, 1, 2, 3, 4}
            \node at (-0.2, \y) {\y};
        \foreach \x in {1, 2, 3, 4, 5, 6, 7, 8}
            \node at (\x, -0.4) {\x};
        \foreach \x/\y in {1/1, 2/3, 3/0, 4/4, 5/3, 6/1, 7/3, 8/4}
            \fill[black] (\x,\y) circle (0.11cm);
    \end{tikzpicture}
    }%
  \caption{The word 13043134.}
  \label{fig: 13043134 word}
proof\end{figure}

We define a \emph{subword} $w'$ of a word $w$ to be a word obtained by taking a subsequence of letters in $w$ which are not necessarily consecutive, for example 3313 is a subword of the word 13043134. The \emph{standardization} of a word $w \in \mathbb{N}^n$ is defined to be the word obtained by replacing every occurrence of the smallest value in $w$ by 0, the next smallest by 1, and so on. The standardization of a word is called a \emph{pattern}\footnote{These are also called Cayley permutations; see~\cite{Mor1984}}. For example, the standardization of the word 296893 is the pattern 042341. A word $w$ \emph{contains} a pattern $\pi$ if there exists a subword of $w$ which standardizes to $\pi$, otherwise $w$ \emph{avoids} $\pi$. As the standardization of 3313 is 1101, the word 13043134 contains the pattern 1101.
This definition of pattern containment is a generalisation of classical pattern avoidance in permutations,
and has also been used to study pattern containment in words, see for example 
\cite{Burstein2003} on counting the number of occurrences of patterns in words.

A word $w=w_1 w_2 \cdots w_n$ is called \emph{non-decreasing} if $w_i \leq w_j$ for all $1\leq i<j\leq n$, and is called  \emph{strictly increasing} if $w_i < w_j$ for all $1\leq i<j\leq n$. For example, 00122 is non-decreasing and 0123 is strictly increasing. \emph{Non-increasing} and \emph{strictly decreasing} words are defined similarly. 

The Erd\H{o}s--Szekeres Theorem, stated below in Theorem~\ref{thm:ES}, is a landmark result in combinatorics with many variants and generalisations, and wide-ranging applications in areas like discrete geometry, graph theory, and computer science.

\begin{theorem}[Erd\H{o}s--Szekeres Theorem, \cite{Erdos1935}]
  \label{thm:ES}
  Let $r,s \in\N$. Every word of length $rs + 1$ contains a non-decreasing subword of length $r+1$ or a non-increasing subword of length $s+1$ (or both).
\end{theorem}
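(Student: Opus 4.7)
The plan is to give the classical pigeonhole proof, adapted carefully to account for the fact that words here may have repeated letters (so ``non-decreasing'' and ``non-increasing'' both permit equality, and a single repeat contributes to \emph{both} a non-decreasing and a non-increasing subword).

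First I would, for each index $i \in [rs+1]$, associate the pair $(a_i, b_i)$, where $a_i$ is the length of the longest non-decreasing subword of $w$ ending at position $i$ and $b_i$ is the length of the longest non-increasing subword of $w$ ending at position $i$. By definition $a_i, b_i \geq 1$, and if any $a_i \geq r+1$ or any $b_i \geq s+1$ we are done. So suppose for contradiction that $a_i \in \{1, \dots, r\}$ and $b_i \in \{1, \dots, s\}$ for every $i$; then each pair $(a_i, b_i)$ lies in an $r \times s$ grid, giving only $rs$ possibilities.

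The key step is to show that the map $i \mapsto (a_i, b_i)$ is injective; pigeonhole on $rs+1$ indices against $rs$ pairs then yields the contradiction. To verify injectivity, take any $i < j$ and consider $w_i$ vs $w_j$. If $w_i \leq w_j$, appending $w_j$ to an optimal non-decreasing subword ending at $i$ gives a longer one ending at $j$, so $a_j > a_i$. If $w_i \geq w_j$, the symmetric argument yields $b_j > b_i$. Either way $(a_i, b_i) \neq (a_j, b_j)$.

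The only subtle point—and the ``main obstacle'' in the sense that it is the one thing to get right when repeats are allowed—is the case $w_i = w_j$. Here both alternatives above apply simultaneously, so both $a_j > a_i$ and $b_j > b_i$; in particular injectivity is not threatened. Once this is observed, the argument is complete and no further machinery is needed.
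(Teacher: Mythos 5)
Your argument is correct: it is the standard Seidenberg-style pigeonhole proof, and you handle the one delicate point (ties $w_i = w_j$, where both coordinates strictly increase) properly, so injectivity of $i \mapsto (a_i,b_i)$ does hold and the contradiction with $rs+1 > rs$ goes through. Note, however, that the paper does not prove this statement at all --- Theorem~\ref{thm:ES} is quoted as the classical Erd\H{o}s--Szekeres theorem with a citation to the original 1935 paper and is used as a black box in the arguments of Section~\ref{sec: first proof} --- so there is no in-paper proof to compare against; your write-up simply supplies a correct, self-contained proof of the cited result, correctly adapted to the non-strict (non-decreasing/non-increasing) formulation needed for words with repeats.
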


For permutations, the Erd\H{o}s--Szekeres Theorem implies that any permutation of size $n^2+1$ contains a strictly increasing or decreasing pattern of size $n+1$, or equivalently either the pattern $01\cdots n$ or $n\cdots 10$. 
For general words that can have repeated values, this is no longer true. 
There have been many variations of the Erd\H{o}s--Szekeres Theorem such as the containment of unimodal and more generally $k$-modal sequences in permutations by \cite{Chung1980}, \cite{Xu2024} and \cite{Gong2025}; finding words with non-decreasing or strictly increasing subwords of a given length by \cite{Schensted1961} and \cite{Itskovich2019}; or the containment of convex and concave patterns by \cite{Erdos1935} and \cite{Suk2016} for example.

This project began at the 2025 Permutation Patterns Pre-Conference Workshop, where we aimed to find an Erd\H{o}s--Szekeres type result for inversion sequences, specifically taking into account their structure and the number of repeated values.
For a positive integer $n$, we define a length $n$ \emph{inversion sequence} to be a word $w_1w_2 \cdots w_n \in \mathbb{N}^n$ where $w_i < i$ for each $i\in[n]$, or equivalently $w_1w_2 \cdots w_n$ is an element of $\{0\}\times \{0,1\}\times \cdots \times \{0,1,\dots,n-1\}$. 
For example, the inversion sequence $002135$ is shown in Figure~\ref{fig:002314 inversion sequence}. Due to the restriction on values in an inversion sequence, all of the points in an inversion sequence must be below the dashed line in Figure \ref{fig:002314 inversion sequence}. 

\begin{figure}[h]
  \centering
  \resizebox{0.4\textwidth}{!}{%
  \begin{tikzpicture}
        \draw[step=1cm,gray,very thin] (-1,0) grid (6,6);
        \draw[thick,->] (-1,0) -- (6,0);
        \draw[thick,->] (-1,0) -- (-1,6);
        \foreach \y in {0, 1, 2, 3, 4, 5}
            \node at (-1.2, \y) {\y};
        \foreach \x in {1, 2, 3, 4, 5, 6}
            \node at (\x-1, -0.4) {\x};
        \foreach \x/\y in {0/0, 1/0, 2/2, 3/1, 4/3, 5/5} {
          \node [font=\large] at (\x,\y) {$\bullet$};
        };    
        \draw[thick, dashed] (-0.5,0) -- (-0.5, 0.5) -- (0.5, 0.5) -- (0.5, 1.5) -- (1.5, 1.5) -- (1.5, 2.5) -- 
        (2.5, 2.5) -- (2.5, 3.5) -- (3.5, 3.5) -- (3.5, 4.5) -- (4.5, 4.5) -- (4.5, 5.5) -- (5.5, 5.5) -- (5.5, 6);
  \end{tikzpicture}
  }%
  \caption{The inversion sequence 002135.}
  \label{fig:002314 inversion sequence}
\end{figure}

In this note, we prove Theorem~\ref{prop: word with m repeats}, which identifies a collection of unavoidable patterns in any word with a given number of repeated values. This result can be applied to inversion sequences and will be used in an upcoming work which outlines an algorithm for enumerating sets of inversion sequences using generating trees. In the paper, this result is used to prove conditions for when the algorithm will terminate. 

We define the number of \emph{repeats} in a word to be the total number of occurrences of values which are not the first occurrence of that value. Equivalently, for a word $w=w_1\cdots w_n$, the number of repeats is number of $i\in [n]$ such that there exists $j<i$ with $w_j=w_i$. For example, 0000 has 3 repeats, and 00110 has 3 repeats as 0 is repeated twice and 1 is repeated once.

For a word $w=w_1 w_2 \cdots w_n$, let $w^{id}=w$, and let $w^{rev}=w_n \cdots w_2 w_1$ be the \emph{reverse} of $w$, e.g. $(2341)^{rev}=1432$. We show that the types of unavoidable patterns are the constant patterns, patterns of the form $001122 \cdots nn$ and their reverses, and patterns of the form $u^{e_1}u^{e_2}$ where $u=01\cdots n$ and $e_i\in \{id,rev\}$. These three types are defined as types~\ref{1},~\ref{2} and~\ref{3} in Theorem~\ref{prop: word with m repeats}, respectively. 

\begin{theorem}
\label{prop: word with m repeats}
Let $k,n\in\mathbb{N}$. Every word with $kn^6+1$ repeats contains one of the following patterns.
\begin{enumerate}[label={\upshape\alph*)}]
    \item\label{1} $0^{k+2}$.
    \item\label{2} $(0011\cdots nn)^e$ for $e \in \{id, rev \}$.
    \item\label{3} $(012 \cdots n)^{e_1}(012 \cdots n)^{e_2}$ for $e_1, e_2 \in \{id, rev \}$.
\end{enumerate}
\end{theorem}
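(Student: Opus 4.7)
The plan is to reduce the problem to a question about intervals and solve it with one Erd\H{o}s--Szekeres step, one application of Mirsky's theorem, and a final Erd\H{o}s--Szekeres step, in a way that exactly uses the $n^6$ budget.

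First I would assume $w$ avoids $0^{k+2}$, so that each letter appears at most $k+1$ times and contributes at most $k$ repeats; the hypothesis on repeats then forces at least $n^6+1$ distinct letters to appear at least twice. For each such letter $v$ I record the interval $[A_v,B_v]$ whose endpoints are the first and last positions of $v$ in $w$. I sort these $\geq n^6+1$ intervals by $A_v$ and apply Erd\H{o}s--Szekeres with $r=s=n^3$ to the sequence of values, extracting a monotone subsequence of length $n^3+1$; for concreteness I assume it is increasing, since the decreasing case yields the reverses of all the patterns appearing below.

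Next I would analyse the remaining $n^3+1$ intervals --- on which $A_v$ and $v$ are both increasing --- via the partial order $i\prec j \iff B_i<A_j$, whose chains are the pairwise disjoint families of intervals and whose antichains are the families in which $\max_i A_i<\min_i B_i$ (all the $A$'s precede all the $B$'s). By Mirsky's theorem, either (i) some chain has size $\geq n+1$, or (ii) the poset is covered by at most $n$ antichains, so some antichain has size $\geq \lceil(n^3+1)/n\rceil \geq n^2+1$. In case (i), the $n+1$ pairwise disjoint intervals with increasing values occupy positions $A_{i_1}<B_{i_1}<\cdots<A_{i_{n+1}}<B_{i_{n+1}}$, giving the subword $v_{i_1}v_{i_1}\cdots v_{i_{n+1}}v_{i_{n+1}}$, which standardises to $0011\cdots nn$ --- pattern (b).

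In case (ii) I would apply Erd\H{o}s--Szekeres once more, this time with $r=s=n$, to the $B_v$-sequence of the $n^2+1$ antichain members (ordered by $A$), producing $n+1$ intervals with $A$ increasing, values increasing, $B$ monotone, and all $A$'s still preceding all $B$'s. If the $B$'s are increasing the induced subword standardises to $012\cdots n\,012\cdots n$; if they are decreasing it standardises to $012\cdots n\,n\cdots10$ --- both are in pattern (c). The point I want to flag is that a naive three-fold iteration of Erd\H{o}s--Szekeres would only work for $n^8+1$ repeats; the saving that brings this down to $n^6+1$ comes from replacing the middle ES step by Mirsky's theorem with the asymmetric thresholds ``chain of length $n+1$'' versus ``antichain of length $n^2+1$''.
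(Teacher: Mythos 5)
Your proof is correct and follows essentially the same route as the paper: one Erd\H{o}s--Szekeres application to pass from $n^6+1$ repeated values to $n^3+1$ of them with monotone first occurrences, then a dichotomy between $n+1$ pairwise ``disjoint'' value-pairs (giving pattern~(b)) and $n^2+1$ pairwise ``overlapping'' ones (giving pattern~(c) after a second Erd\H{o}s--Szekeres application), with identical bookkeeping. The only difference is in packaging: you get the dichotomy from Mirsky's theorem applied to the interval order on (first, last)-occurrence intervals, whereas the paper proves exactly that instance of the chain/antichain dichotomy by hand --- splitting the $n^3+1$ indices into $n$ blocks of size $n^2$ and finding either one block whose second occurrences all lie to the right of its last first occurrence (your antichain of size $n^2+1$) or one chain link per block --- and your use of last rather than second occurrences is immaterial.
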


When $k=1$, Theorem~\ref{prop: word with m repeats} is best possible as shown by the following result. See Section~\ref{sec: conc} for a discussion when $k>1$.
\begin{prop}
\label{prop: bound is tight}
There exists a word with $n^6$ repeats that does not contain any of the following patterns.
\begin{enumerate}[label={\upshape \alph*)}]
    \item\label{eg-1} $0^3$.
    \item\label{eg-2} $(0011\cdots nn)^e$ for $e \in \{id, rev \}$.
    \item\label{eg-3} $(012 \cdots n)^{e_1}(012 \cdots n)^{e_2}$ for $e_1, e_2 \in \{id, rev \}$.
\end{enumerate}
\end{prop}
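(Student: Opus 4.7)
My plan is a recursive construction on intervals. Identify each value $v$ that appears twice in a word with the interval $[l_v,r_v]$ given by its two positions. For any two intervals $[l_v,r_v]$ and $[l_{v'},r_{v'}]$ with $v<v'$, the relative order of their four endpoints falls into one of six possibilities, which I label $R_1,\ldots,R_6$: disjoint with $v$ on the left, disjoint with $v$ on the right, nested with $v$ outside, nested with $v$ inside, crossing with $v$ on the left, crossing with $v$ on the right. A pattern of type \ref{eg-2} or \ref{eg-3} of size $n+1$ appears in a word precisely when $n+1$ intervals are pairwise in a single $R_i$, and \ref{eg-1} appears precisely when some value occurs at least three times. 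I will exhibit a word with $n^6$ distinct values, each appearing exactly twice, in which no $n+1$ intervals are pairwise in any single $R_i$.

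For each $i\in\{1,\ldots,6\}$, define an expansion operator $T_i$: given a block $B$ of intervals, $T_i(B)$ consists of $n$ relabelled copies $B^{(1)},\ldots,B^{(n)}$ (with $B^{(c)}$ using smaller value labels than $B^{(c+1)}$) laid out on the line so that any interval in $B^{(a)}$ and any interval in $B^{(b)}$ with $a<b$ are in relation $R_i$. For $T_1$ and $T_2$ this is simply concatenation of the copies in left-to-right or right-to-left order respectively. For $T_3,T_4,T_5,T_6$, write each copy as $\mathrm{L}(B^{(c)})\mathrm{R}(B^{(c)})$, where $\mathrm{L}$ collects the first occurrences of values and $\mathrm{R}$ the second (assumed contiguous halves), and interleave: for example, $T_5(B)=\mathrm{L}(B^{(1)})\cdots\mathrm{L}(B^{(n)})\mathrm{R}(B^{(1)})\cdots\mathrm{R}(B^{(n)})$, and $T_3,T_4,T_6$ are defined by analogous interleavings with the $\mathrm{L}$s and $\mathrm{R}$s arranged in the orders dictated by the relations. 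Starting from the single-interval block $B_0=aa$, I will set $B_1=T_6(B_0)$, $B_2=T_5(B_1)$, $B_3=T_4(B_2)$, $B_4=T_3(B_3)$, $B_5=T_2(B_4)$, $B_6=T_1(B_5)$, and take $W_n:=B_6$, which will have $n^6$ intervals.

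The main obstacle is the compatibility of the operators. The operators $T_3,T_4,T_5,T_6$ are defined only on blocks whose first occurrences all precede their second occurrences (so that $\mathrm{L}(B)$ and $\mathrm{R}(B)$ make sense as contiguous halves), and a direct check shows they preserve this invariant, while $T_1,T_2$ destroy it. This forces the inside-out order above, with $B_0=aa$ trivially satisfying the invariant as the starting point. Each $T_i$ also preserves the relation between any pair of intervals lying inside a single copy, because each layout preserves the relative orders of $\mathrm{L}$-endpoints and of $\mathrm{R}$-endpoints within each copy and keeps all $\mathrm{L}$-endpoints before all $\mathrm{R}$-endpoints of that copy. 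Tagging each interval in $W_n$ by a tuple $(c_1,\ldots,c_6)\in[n]^6$ recording which copy it lies in at each nested level (with $c_1$ labelling the outermost copy introduced by $T_1$), I will show by induction that two intervals whose tuples first differ at coordinate $c_i$ are in relation $R_i$. Hence $n+1$ intervals pairwise in a single $R_i$ would share coordinates $c_j$ for $j<i$ and have pairwise distinct values in $c_i\in[n]$, which is impossible. This produces a word with $n^6$ repeats that avoids the patterns of types \ref{eg-1}, \ref{eg-2}, \ref{eg-3}.
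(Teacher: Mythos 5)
Your proposal is correct, and it arrives at an $n^6$-interval iterated-sum word by a genuinely different organizing principle. You first reformulate avoidance: when every value occurs exactly twice, the patterns in \ref{eg-2} and \ref{eg-3} correspond exactly to $n+1$ intervals pairwise in one of six mutually exclusive relations $R_1,\dots,R_6$ (equivalently the two-value patterns $0011,1100,0110,1001,0101,1010$), so the task becomes realizing, by intervals, a $6$-colouring of the pairs from an $n^6$-set with no monochromatic $(n+1)$-clique; you then build this as a product colouring, and verification collapses to the single observation that two intervals whose copy-tuples first differ in coordinate $i$ lie in relation $R_i$. The paper instead exhibits explicit words $q=p^{\ominus n^2}\cdot r'$ and $s=(q^{\oplus n})^{\ominus n}$ and checks avoidance case by case (Lemma~\ref{claim: p and r concat containment}) via longest-monotone-subword bounds; your $T_1,T_2$ are its direct and skew sums, and your separated block $B_4$ plays the role of its $q$, though the actual half-permutations differ. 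Your formulation buys uniformity across all six relations and makes structural the reason $T_1,T_2$ must sit outermost. Two points to make explicit in a full write-up, neither affecting correctness: the clique reformulation uses that every value occurs exactly twice, and within-copy relations are preserved for $T_1,T_2$ simply because copies are kept intact, whereas for $T_3,\dots,T_6$ the argument is the one you give via separatedness.
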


The asymmetric versions of both Theorem~\ref{prop: word with m repeats} and Proposition~\ref{prop: bound is tight}, where the lengths of the patterns in~\ref{2} and~\ref{3} are allowed to differ like in Theorem~\ref{thm:ES}, can also be easily obtained by modifying our proof (see Theorem~\ref{thm:asym}). 

We will prove Theorem~\ref{prop: word with m repeats} and Proposition~\ref{prop: bound is tight} in Section~\ref{sec: first proof} and Section~\ref{sec: second proof}, respectively. In Section~\ref{sec: conc}, we will discuss some related problems and future research directions. 


\section{Unavoidable patterns}\label{sec: first proof}
In this section, we prove Theorem~\ref{prop: word with m repeats}, which states that any word with $kn^6+1$ repeats must contain one of the patterns in~\ref{1},~\ref{2} and~\ref{3}.

\begin{proof}[of Theorem~\ref{prop: word with m repeats}]
Let $w$ be a word with $kn^6+1$ repeats. If $w$ contains a value that occurs $k+2$ times, then it contains the pattern $0^{k+2}$, so \ref{1} holds. Now supppose that every value appears at most $k+1$ times. Then, there are at least $n^6+1$ distinct values in $w$ each with at least one repeat. Fix any $n^6+1$ distinct values in $w$ with repeats, and let $w'$ be the subword of $w$ consisting of the first two occurrences of each of these values in $w$. In particular, $w'$ has length $2n^6+2$.

Let $a_0 a_1\cdots a_{n^6}$ be the subword of $w'$ consisting of the first occurrence of each of the $n^6+1$ distinct values in $w'$. By Theorem~\ref{thm:ES} (Erd\H os--Szekeres), there is a subword $a'_0 a'_1\cdots a'_{n^3}$ of $a_0 a_1\cdots a_{n^6}$ that is monotone.

We first attempt to find a pattern in~\ref{3} using Claim~\ref{claim:main} below. 

\begin{claim}\label{claim:main}
If there exists an index $0\leq j\leq n^3-n^2$, such that the second occurrences in $w'$ of the values $a'_j,a'_{j+1},\ldots,a'_{j+n^2}$ are all after the first occurrence of the value $a'_{j+n^2}$, then $w'$ contains a pattern in~\ref{3}.
\end{claim}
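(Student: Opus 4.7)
The plan is to apply the Erd\H os--Szekeres theorem a second time, now to the arrangement of second occurrences. Assume without loss of generality that the monotone subsequence $a'_j, a'_{j+1}, \ldots, a'_{j+n^2}$ is strictly increasing in value; the decreasing case is symmetric. Since the first occurrences already appear in $w'$ in this positional order and, by hypothesis, all corresponding second occurrences lie after the first occurrence of $a'_{j+n^2}$, the restriction of $w'$ to just these $2(n^2+1)$ letters factors as $B_1 B_2$, where $B_1 = a'_j a'_{j+1}\cdots a'_{j+n^2}$ is strictly increasing in value and $B_2$ is some permutation of the same $n^2+1$ values.

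Next I would apply the Erd\H os--Szekeres theorem to $B_2$, which has length $n^2+1$, to extract a monotone subsequence $b_0 b_1\cdots b_n$ of length $n+1$. Inside $B_1$, these same letters $b_0,\ldots,b_n$ necessarily appear in strictly increasing order of value, forced by $B_1$'s monotonicity. Thus, if the extracted subsequence is increasing in $B_2$, the concatenated subword of length $2(n+1)$ standardises to $012\cdots n\,012\cdots n$; if it is decreasing in $B_2$, it standardises to $012\cdots n\,n\cdots10$. Both are patterns in~\ref{3}. The symmetric argument in the case that $a'_j,\ldots,a'_{j+n^2}$ is strictly decreasing yields the remaining two patterns $n\cdots 10\,012\cdots n$ and $n\cdots 10\,n\cdots 10$ in~\ref{3}.

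The main point to watch is not a computational obstacle but the bookkeeping in the $2\times 2$ interaction between the monotonicity of the first-occurrence run $a'_j,\ldots,a'_{j+n^2}$ and the monotonicity of the Erd\H os--Szekeres subsequence extracted from $B_2$. The decisive observation that tames this case split is that, once the monotonicity of $B_1$ is fixed, the positional order of any subset of letters inside $B_1$ is determined purely by value and is therefore independent of what happens in $B_2$; the four combinations then line up cleanly with the four patterns of~\ref{3}.
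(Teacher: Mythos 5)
Your proposal is correct and follows essentially the same route as the paper: apply Erd\H{o}s--Szekeres to the block of second occurrences (which the hypothesis guarantees sits entirely after the block of first occurrences) to extract a monotone subword of length $n+1$, then observe that the corresponding first occurrences are automatically monotone and concatenate. The paper simply phrases this without the explicit $2\times 2$ case split, noting that any monotone-followed-by-monotone arrangement of the doubled value set yields one of the four patterns in~\ref{3}.
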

\begin{proof}[of Claim~\ref{claim:main}]
Suppose the repeats of $a'_j, a'_{j+1},\ldots, a'_{j+n^2}$ form the subword $b_0b_1\cdots b_{n^2}$ in $w'$. By Theorem~\ref{thm:ES} (Erd\H os--Szekeres), $b_0\cdots b_{n^2}$ contains a monotone subword $b_0'\cdots b_{n}'$. Let $a_0'' \cdots a_n''$ be the subword of $a'_j a'_{j+1} \cdots a'_{j+n^2}$ consisting of the values in $b_0' \cdots b_n'$, and note that it is also monotone. Therefore, $a_0'' \cdots a_n'' b_0' \cdots b_n'$ is a subword of $w'$ forming a pattern in~\ref{3}.
\renewcommand{\qedsymbol}{$\boxdot$}
\end{proof}
\renewcommand{\qedsymbol}{$\square$}

If the hypothesis in Claim~\ref{claim:main} holds for any $j$ of the form $j=(t-1)n^2$ with $t\in[n]$, then we are done, so let us assume the opposite. Hence, for each $t\in[n]$, there exists some $(t-1)n^2\leq i_t\leq tn^2-1$, such that both occurrences of $a'_{i_t}$ appear before the first occurrence of $a'_{tn^2}$ in $w'$ (see Figure~\ref{fig:case 1}). Therefore, $a_{i_1}'a_{i_1}'\cdots a_{i_n}'a_{i_n}'a_{n^3}'a_{n^3}'$ is a subword of $w'$. Moreover, this forms a pattern in \ref{2} because $a_{i_1}' \cdots a_{i_n}' a_{n^3}'$ is a subword of $a'_0  a'_1 \cdots a'_{n^3}$, and hence monotone.
\end{proof}
\begin{figure}[h]
    \centering

\[\begin{tikzpicture}[xscale=.9,yscale=.75]
     \node at (-.5,.5) {$w'=$};
    \draw (0,0) rectangle (12,1);
    \draw (4,0)--(4,1);
    \draw (8,0)--(8,1);
    \node at (12.5,.5) {$\cdots$};

    \node at (.5,.5) {$a_0'$};
    \node at (4.6,.5) {$a_{n^2}'$};
    \node at (8.65,.5) {$a_{2n^2}'$};
    \node (a1) at (1.5,-1) {$a_{i_1}'$};
    \node (b1) at (3,-1) {$a_{i_1}'$};
    \draw[->] (a1)--(1.5,.5);
    \draw[->] (b1)--(3,.5);
    \node (a2) at (5.5,-1) {$a_{i_2}'$};
    \node (b2) at (7,-1) {$a_{i_2}'$};
    \draw[->] (a2)--(5.5,.5);
    \draw[->] (b2)--(7,.5);
    \node (a3) at (9.6,-1) {$a_{i_3}'$};
    \node (b3) at (11.1,-1) {$a_{i_3}'$};
    \draw[->] (a3)--(9.6,.5);
    \draw[->] (b3)--(11.1,.5);
\end{tikzpicture}\]
    \caption{An occurrence of a pattern type \ref{2} in $w'$.}
    \label{fig:case 1}
\end{figure}

\section{Extremal construction}
\label{sec: second proof}
In this section, we construct an example proving Proposition~\ref{prop: bound is tight}, which shows that when $k=1$, Theorem~\ref{prop: word with m repeats} is best possible. In order to describe our construction, we first need the following definitions. For convenience, all words in this section do not contain the value 0. 

Let $\pi = \pi_1 \cdots \pi_n$ be a word with length $n$ and maximum value $h= \text{max}\{\pi_i\mid i \in [n] \}$. In the same way, let $\sigma= \sigma_1 \cdots \sigma_m$ be a word with length $m$ and maximum value $\ell = \text{max} \{\sigma_j\mid j \in [m] \}$.
\begin{itemize}
    \item The \emph{concatenation} of $\pi$ and $\sigma$, denoted $\pi\cdot\sigma$, is the word $$\pi_1 \cdots \pi_n \sigma_1 \cdots \sigma_m.$$
    \item The \emph{direct sum} of $\pi$ and $\sigma$, denoted $\pi \oplus \sigma$, is the word \[\pi_1 \cdots \pi_n b_1 b_2\cdots b_m,\] such that $b_j = \sigma_j + h$ for every $j\in[m]$.
    Using the graphical representation in Figure~\ref{fig: 13043134 word}, this can be viewed as a grid where the bottom left corner of the grid represents $\pi$, the top right corner represents $\sigma$, and the other two corners are empty, as shown in Figure~\ref{fig: direct sum}. For example, $31422 \oplus 4132 =314228576$ and $21 \oplus 21 \oplus 21=214365$.
    \item Similarly, the \emph{skew sum} of $\pi$ and $\sigma$, denoted $\pi \ominus \sigma$, is the word 
    \[a_1a_2\cdots a_n\sigma_1 \cdots \sigma_m,\]
    such that $a_i = \pi_i + \ell$ for every $i\in[n]$.
    A graphical representation is given in Figure \ref{fig: skew sum}. For example, $2413 \ominus 121 = 4635121$ and $12 \ominus 12 \ominus 12=563412$. 
\end{itemize} 

\begin{figure}[h]
    \centering
    \begin{subfigure}{0.4\textwidth}
      \centering
      \begin{tikzpicture}
      \tikzstyle{every node}=[font=\LARGE]
      \draw  (0.25,9.5) rectangle (2.25,7.5);
      \draw [ fill=gray!40] (1.25,8.5) rectangle (0.25,9.5);
      \draw [  fill=gray!40 ] (1.25,8.5) rectangle (2.25,7.5);
      \node [font=\LARGE] at (0.75,8) {$\pi$};
      \node [font=\LARGE] at (1.75,9) {$\sigma$};
      \end{tikzpicture}
      \subcaption{The direct sum $\pi \oplus \sigma$.}
      \label{fig: direct sum}
    \end{subfigure}
    \centering
    \begin{subfigure}{0.4\textwidth}
      \centering
      \begin{tikzpicture}
      \tikzstyle{every node}=[font=\LARGE]
      \draw  (0.25,9.5) rectangle (2.25,7.5);
      \draw [fill=gray!40] (1.25,7.5) rectangle (0.25,8.5);
      \draw [fill=gray!40] (1.25,9.5) rectangle (2.25,8.5);
      \node [font=\LARGE] at (0.75,9) {$\pi$};
      \node [font=\LARGE] at (1.75,8) {$\sigma$};
      \end{tikzpicture}
    \subcaption{The skew sum $\pi \ominus \sigma$.}
    \label{fig: skew sum}
    \end{subfigure}
    \centering
    \caption{The direct sum and skew sum of two words.}
    \label{fig: direct and skew sums}
\end{figure}

Note that both direct sums and skew sums are associative operations. We will also use $w^{\oplus n}=w \oplus w \oplus \cdots \oplus w$ and $w^{\ominus n}=w \ominus w \ominus \cdots \ominus w$
to denote taking the direct sums or skew sums of $n$ copies of $w$.
With these definitions, we can now construct the word we use to prove Proposition~\ref{prop: bound is tight}. This is done in several steps. Throughout, let $n$ be a fixed positive integer.

Let $p$ denote the word $12\cdots n^2$ of length $n^2$ and let $t$ denote the word $1 2 \cdots n$ of length $n$. Furthermore, let $r$ denote the word $t^{\ominus n}$ of length $n^2$ formed by taking the skew sums of $n$ copies of $t$ (see also Figure~\ref{fig:$n$ disjoint increasing sequences of length $n$}).
Since $t=1^{\oplus n}$, we may also write $r$ as $(1^{\oplus n})^{\ominus n}$.
For example, for $n=3$ we have $r=123\ominus 123\ominus 123=789456123$.
Observe that any monotone subword of $r$ has length at most $n$.

\begin{figure}[h]
    \centering
    \begin{tikzpicture}
        \draw[step=1cm,black,very thin] (0, 0) grid (4,4);
        \draw (0.2, 3.2) -- (0.8, 3.8);
        \draw (1.2, 2.2) -- (1.8, 2.8);
        \draw (3.2, 0.2) -- (3.8, 0.8);
        \node at (2.55, 1.6) {$\ddots$};        
    \end{tikzpicture}
    \caption{The word $r$ formed by taking the skew sums of $n$ copies of the word $t=12\cdots n$.}
    \label{fig:$n$ disjoint increasing sequences of length $n$}
\end{figure}

With these words $p$ and $r$, we create a word $q$ of length $2n^4$ which is the main building block of our construction in Proposition~\ref{prop: bound is tight}. 
First, take the skew sums of $n$ copies of $r$ to create the length $n^3$ word \[r^{\ominus n}=r \ominus r \ominus \cdots \ominus r.\]
Then, take the direct sums of $n$ copies of this word to create the length $n^4$ word \[r' = (r^{\ominus n})^{\oplus n} = ((r \ominus \cdots \ominus r) \oplus \cdots \oplus (r \ominus \cdots \ominus r)),\] which is represented in Figure~\ref{fig:lots of r sums}.

\begin{figure}[h]
    \centering
    \resizebox{0.4\textwidth}{!}{
    \begin{tikzpicture}
        \draw[step=2cm,black,very thin] (0, 0) grid (6,6);
        \node[font=\LARGE] at (3, 3.2) {$\iddots$};   
        \foreach \x/\y in {0.4/1.6, 0.8/1.2, 1.6/0.4, 4.4/5.6, 4.8/5.2, 5.6/4.4}
            \node[font=\large] at (\x, \y) {$r$};
        \foreach \x/\y in {1.2/0.9, 5.2/4.9}
            \node[font=\tiny] at (\x, \y) {$\ddots$};
    \end{tikzpicture}
    }
    \caption{The word $r'=(r^{\ominus n})^{\oplus n}$.}
    \label{fig:lots of r sums}
\end{figure}

The word $q$ is obtained by concatenating $r'$ to the end of the skew sums of $n^2$ copies of $p$, as follows. 
\begin{equation*}
    \begin{split}
        q &= p^{\ominus n^2} \cdot r' = p^{\ominus n^2} \cdot (r^{\ominus n})^{\oplus n} \\
        &= (p \ominus \cdots \ominus p) \cdot ((r \ominus \cdots \ominus r) \oplus \cdots \oplus (r \ominus \cdots \ominus r))
    \end{split}
\end{equation*}
For example, for $n=2$ we have \[q=(p\ominus p\ominus p\ominus p)\cdot((r\ominus r)\oplus(r\ominus r)),\] 
and $q$ is depicted in Figure~\ref{fig:p and rs in q when n=2}, with the grid arranged so that copies of $p$ and $r$ are in the same rows if and only if they are words on the same set of values.
\begin{figure}[h]
    \centering
    \[q=\begin{array}{|c|c|c|c|c|c|c|c|}
    \hline
        p&&& &&&r&\\\hline
        &p&& &&&&r\\\hline
        &&p& &r&&&\\\hline
        &&&p &&r&&\\\hline 
    \end{array}\]
    \caption{The arrangement of $p$'s and $r$'s in $q$ when $n=2$.}
    \label{fig:p and rs in q when n=2}
\end{figure}

Note that the restriction of the word $q$ to any of
the rows shown in Figure~\ref{fig:p and rs in q when n=2} is a concatenation of $p$ and $r$. A pictorial representation of $p\cdot r$ when $n=4$ is shown in Figure~\ref{fig:p and r concat}.

\begin{figure}[h]
\[\begin{tikzpicture}[scale=.25]
\tikzset{every path/.style={very thick}}
    \draw (1,1)--(16,16);
    \draw (17,13)--(20,16);
    \draw (21,9)--(24,12);
    \draw (25,5)--(28,8);
    \draw (29,1)--(32,4);
    \tikzset{every path/.style={color=gray,thin}}
    \draw (1,1)-- (29,1);
    \draw (4,4)-- (32,4);
    \draw (5,5)--(25,5);
    \draw (8,8)--(28,8);
    \draw (9,9)--(21,9);
    \draw (12,12)-- (24,12);
    \draw (13,13)--(17,13);
    \draw (16,16)-- (20,16);
\end{tikzpicture}\]
    \caption{A representation of $p\cdot r$ when $n=4$.}
    \label{fig:p and r concat}
\end{figure}

We now prove a key property of the word $q$ in Lemma~\ref{claim: p and r concat containment}.

\begin{lemma}\label{claim: p and r concat containment}
    The word $q$ does not contain any of the following patterns: 
\begin{enumerate}[label={\upshape\alph*)}]
    \item\label{claim-1}  $0^3$,
    \item\label{claim-2} $0011$, $1100$,
    \item\label{claim-3} $(012 \cdots n)^{e_1}(012 \cdots n)^{e_2}$ for $e_1, e_2 \in \{id, rev \}$.
\end{enumerate}
\end{lemma}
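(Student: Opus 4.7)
The plan hinges on the following key structural observation: every value in $\{1, 2, \ldots, n^4\}$ appears in $q$ exactly twice, once in the $p^{\ominus n^2}$ portion (its first occurrence) and once in the $r'$ portion (its second occurrence). This holds because $p^{\ominus n^2}$ entirely precedes $r'$ in $q$, both parts have length $n^4$, and each contains every value in $\{1,\ldots,n^4\}$ exactly once.

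Part (a) is immediate from this observation. For part (b), consider the pattern $0011$: after its two copies of the smaller value (one in each part), we are already inside $r'$, so the two subsequent copies of the larger value would both need to lie in $r'$, which is impossible. The pattern $1100$ is handled symmetrically.

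Part (c) is the substantive case. A subword of $q$ realizing any such pattern has length $2(n+1)$, uses $n+1$ distinct values, and repeats each exactly twice; since each value occurs only once per part of $q$, the subword contributes exactly $n+1$ positions to each part, and because $p^{\ominus n^2}$ precedes $r'$, the first $n+1$ positions lie in $p^{\ominus n^2}$ and the last $n+1$ in $r'$. Hence the $n+1$ chosen values form a length-$(n+1)$ monotone subword in $p^{\ominus n^2}$ (increasing if $e_1 = id$, decreasing if $e_1 = rev$) and a length-$(n+1)$ monotone subword in $r'$ (with direction determined by $e_2$) on the same values.

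To dispatch all four combinations $(e_1, e_2)$ uniformly, I would introduce a base-$n$ encoding: write each $v \in \{1, \ldots, n^4\}$ uniquely as $v = (b-1)n^3 + (n-c)n^2 + (n-d)n + e$ with $(b,c,d,e) \in [n]^4$, and set $a = (b-1)n + (n-c+1)$ and $b' = (n-d)n + e$, so that also $v = (a-1)n^2 + b'$. The positions of $v$ in $p^{\ominus n^2}$ and in $r'$ are then the explicit expressions $(n^2-a)n^2 + b'$ and $(b-1)n^3 + (c-1)n^2 + (d-1)n + e$, from which a direct inspection yields: increasing subwords of $p^{\ominus n^2}$ have constant $a$, while decreasing subwords of $p^{\ominus n^2}$ use one value per $a$; in $r'$, increasing subwords admit at most one value per $b$ from any set of values with pairwise distinct $(b,c)$, and decreasing subwords of $r'$ have constant $b$. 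The four sub-cases then close uniformly: if $e_1 = id$, the $n+1$ chosen values share a single $a$, hence a single $(b,c)$, and in $r'$ they occupy one $(b,c)$-sub-block order-isomorphic to the word $r = t^{\ominus n}$, whose longest monotone subword has length only $n$; if $e_1 = rev$, the values have pairwise distinct $a$, hence distinct $(b,c)$, and a pigeonhole on $b \in [n]$ rules out both $e_2 = id$ (which would require $n+1$ distinct values of $b$) and $e_2 = rev$ (which would require constant $b$ with $n+1$ distinct values of $c \in [n]$). The main obstacle is setting up the base-$n$ bookkeeping; once that is in place, each sub-case closes by a single pigeonhole or length comparison.
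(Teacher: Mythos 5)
Your proposal is correct and follows essentially the same route as the paper: each value occurs once in $p^{\ominus n^2}$ and once in $r'$, forcing the first half of a type~\ref{claim-3} pattern into $p^{\ominus n^2}$ and the second into $r'$, then splitting on $e_1$ so that $e_1=id$ confines the values to a single copy of $p$ and hence a single copy of $r$ (whose longest monotone subword has length $n$), while $e_1=rev$ forces distinct copies of $r$, where the macro-arrangement of copies (the reverse of $r$) again has no monotone subword of length $n+1$. Your base-$n$ coordinates are just explicit bookkeeping for the same structural facts the paper reads off the skew/direct-sum decomposition, and your verification of parts~\ref{claim-1} and~\ref{claim-2} matches the paper's.
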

\begin{proof}
We rule out these patterns one by one.

For~\ref{claim-1}, every number appears exactly twice in $q$, so the pattern $0^3$ does not appear. 

For~\ref{claim-2}, let $a,b\in[n^2]$ be distinct. Then, $a$ and $b$ each appears exactly once in $p^{\ominus n^2}$ and once in $(r^{\ominus n})^{\oplus n}$. Since $q$ is formed by concatenating these two words, the second occurrence of $a$ in $q$ is after the first occurrence of $b$, and vice versa. Therefore, neither $0011$ nor $1100$ can appear as a pattern. 

For~\ref{claim-3}, if any such pattern exists in $q$, then the $(012\cdots n)^{e_1}$ must appear entirely within $p^{\ominus n^2}$, and $(012\cdots n)^{e_2}$ must appear entirely within $(r^{\ominus n})^{\oplus n}$. We split into two cases depending on whether $e_1$ is $id$ or $rev$. 

\textbf{Case 1.}
If $e_1=id$ then $(012 \cdots n)^{e_1}=012 \cdots n$ is increasing, so it must be within a single copy of $p$, as across skew sums the entries are decreasing by definition. 
Thus, the occurrence of $(012 \cdots n)^{e_2}$ in $(r^{\ominus n})^{\oplus n}$ must be entirely within the copy of $r$ corresponding to this $p$. In Figure~\ref{fig:p and rs in q when n=2}, this copy of $r$ is the one located in the same row as the corresponding copy of $p$.
However, this would imply that this copy of $r$ contains a monotone subword of length $n+1$, which is a contradiction. 

\textbf{Case 2.}
If $e_1=rev$ then $(012 \cdots n)^{e_1}=n \cdots 210$ is decreasing, so the $n+1$ numbers forming this pattern must appear in pairwise distinct copies of $p$, as $p$ itself is increasing. Then, the $n+1$ numbers in the occurrence of $(012 \cdots n)^{e_2}$ in $(r^{\ominus n})^{\oplus n}$ must also each come from different copies of $r$.
Taking one value from each copy of $r$ in $q$ creates a subword which standardizes to $(1^{\ominus n})^{\oplus n}$ (see Figure~\ref{fig:lots of r sums}, taking one value from each copy of $r$), which is the reverse of the word $r$. As the longest monotone subword in $r$ has length $n$, the longest monotone subword in $(1^{\ominus n})^{\oplus n}$ also has length $n$. Hence, there is no monotone subword of length $n+1$ in $(r^{\ominus n})^{\oplus n}$ whose entries all come from different copies of $r$, which is a contradiction.
\end{proof} 

Finally, we can define the word $s$ we use to prove Proposition~\ref{prop: bound is tight}. This is obtained by first taking the direct sum of $n$ copies of the word $q$ to obtain the length $2n^5$ word \[q^{\oplus n} = q \oplus q \oplus \cdots \oplus q,\]
and then taking the skew sums of $n$ copies of this word to create the length $2n^6$ word \[s = (q^{\oplus n})^{\ominus n} = (q \oplus q \oplus \cdots \oplus q) \ominus  \cdots \ominus (q \oplus q \oplus \cdots \oplus q).\]
For example, for $n=2$, the word $s$ is depicted in Figure~\ref{fig: qs in s, n=2}.

\begin{figure}[h]
    \centering
\[s=\begin{array}{|c|c|c|c|}
\hline
    &q&& \\\hline
    q&&& \\\hline
    &&&q\\\hline
    &&q& \\\hline 
\end{array}\]
\def\q{\arraycolsep=1.4pt\begin{array}{cccccccc}
    p&&& &&&r&\\
    &p&& &&&&r\\
    &&p& &r&&&\\
    &&&p &&r&&\\
\end{array}}
\[s=\arraycolsep=1.4pt\begin{array}{|c|c|c|c|}
\hline
    &\q&& \\\hline
    \q&&& \\\hline
    &&&\q\\\hline
    &&\q& \\\hline 
\end{array}\]
    \caption{The arrangement of subwords in $s$ for $n=2$.}
    \label{fig: qs in s, n=2}
\end{figure}

Now we can prove Proposition~\ref{prop: bound is tight}.

\begin{proof}[of Proposition~\ref{prop: bound is tight}]
Let $s$ be the word of length $2n^6$ constructed above. We will prove that $s$ does not contain any of the patterns stated in the proposition. Clearly, $s$ does not contain a pattern of type \ref{eg-1} as each value appears exactly twice. 

If $s=(q^{\oplus n})^{\ominus n}$ contains a pattern $0011\cdots nn$, then the pattern must be entirely contained within a copy of $q^{\oplus n}$, because across skew sums the entries are decreasing from definition. Then, by the pigeonhole principle, one copy of $q$ in this $q^{\oplus n}$ must contain two distinct values that are part of this $0011\cdots nn$ pattern, and thus this $q$ contains a pattern $0011$. However, this is not possible by Lemma~\ref{claim: p and r concat containment}.

If $s=(q^{\oplus n})^{\ominus n}$ contains a pattern $nn\cdots1100$, then by the pigeonhole principle, one copy of $q^{\oplus n}$ must contribute two distinct entries to this pattern. However, these two distinct entries cannot come from two distinct copies of $q$ in this $q^{\oplus n}$, as they must be in decreasing order. Thus, they come from the same copy of $q$, which then contains a pattern $1100$. This is again impossible by Lemma~\ref{claim: p and r concat containment}, which proves that no pattern of type \ref{eg-2} is contained in $s$. 

Any patterns of type \ref{eg-3} in $s$ must be entirely contained within a single copy of $q$ that forms $s$ because for every number in $s$, both of its occurrences in $s$ lie in the same copy of $q$. But this is not possible as $q$ contains none of the patterns in \ref{eg-3} by Lemma~\ref{claim: p and r concat containment}.

This covers all possibilities, so none of the patterns in the statement of the proposition is contained in $s$.
\end{proof}

\section{Concluding remarks}\label{sec: conc}
In this paper, we proved in Theorem~\ref{prop: word with m repeats} the patterns that must be contained in a word with $kn^6+1$ repeated values.
We also showed in Proposition~\ref{prop: bound is tight} that Theorem~\ref{prop: word with m repeats} is best possible when $k=1$. For $k>1$, by changing the word $t$ in the construction of Proposition~\ref{prop: bound is tight} from $12\cdots n$ to $1^k2^k\cdots n^k$ while keeping everything else the same, essentially the same proof shows that the resulting word $s$ consists of exactly $k+1$ occurrences of every number in $[n]$, and thus $kn^6$ repeats, but does not contains any of the seven patterns $0^{k+2}$, $(0^{k+1}1^{k+1}\cdots n^{k+1})^e$, and $(012 \cdots n)^{e_1}(012 \cdots n)^{e_2}$, where $e,e_1,e_2\in\{id,rev\}$. This suggests that Theorem~\ref{prop: word with m repeats} is likely not optimal when $k>1$, and it remains open to determine the minimum number of repeats needed to guarantee one of the seven patterns considered there.
\begin{question}
For integers $n\geq 1$ and $k>1$, what is the smallest $m=m(n,k)$ such that every word with $m$ repeats contain one of the following patterns?
\begin{enumerate}[label={\upshape\alph*)}]
    \item $0^{k+2}$.
    \item $(0011\cdots nn)^e$ for $e \in \{id, rev \}$.
    \item $(012 \cdots n)^{e_1}(012 \cdots n)^{e_2}$ for $e_1, e_2 \in \{id, rev \}$.
\end{enumerate}
\end{question}

By adapting our proof of Theorem~\ref{prop: word with m repeats} and Proposition~\ref{prop: bound is tight}, we can easily obtain the following asymmetric version.
\begin{theorem}\label{thm:asym}
Let $k,x_1,x_2,y_1,y_2,z_1,z_2$ be positive integers. Every word with $kx_1x_2y_1y_2z_1z_2+1$ repeats contains one of the following patterns.
\begin{enumerate}[label={\upshape\alph*)}]
    \item $0^{k+2}$.
    \item $0011\cdots x_1x_1$ or $x_2x_2\cdots1100$.
    \item $012 \cdots y_1012 \cdots y_1$ or $012 \cdots y_2y_2\cdots 210$ or $z_1 \cdots 210012\cdots z_1$ or $z_2 \cdots 210z_2\cdots210$.
\end{enumerate}
Moreover, this result is best possible when $k=1$, as there is a word with $x_1x_2y_1y_2z_1z_2$ repeats avoiding all of the patterns above. 
\end{theorem}

While in this note we only considered unavoidable patterns for words with a given number of repeats, it is also natural to consider the same problem for a more restricted family of words of length $(k+1)N$ consisting of exactly $k+1$ occurrences of each number in $[N]$, and thus $kN$ repeats. In this setting, as mentioned above, we can easily adapt our proofs of Theorem~\ref{prop: word with m repeats} and Proposition~\ref{prop: bound is tight} to show the following.
\begin{prop}
For every integer $k\geq 1$, any word consisting of exactly $k+1$ occurrences of each number in $[n^6+1]$ must contain one of the following patterns.
\begin{enumerate}[label={\upshape\alph*)}]
    \item $(0^{k+1}1^{k+1}\cdots n^{k+1})^e$ for $e \in \{id, rev \}$.
    \item $(012 \cdots n)^{e_1}(012 \cdots n)^{e_2}$ for $e_1, e_2 \in \{id, rev \}$. 
\end{enumerate}
Moreover, the choice of $n^6+1$ is best possible for every $k$.
\end{prop}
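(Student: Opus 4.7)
The plan is to establish this proposition by adapting the two main arguments of the paper: the containment half mirrors the proof of Theorem~\ref{prop: word with m repeats}, and the best-possible half mirrors the construction of Proposition~\ref{prop: bound is tight}. The key leverage is that every value appears \emph{exactly} $k+1$ times, which automatically rules out the pattern $0^{k+2}$ and, more importantly, upgrades the ``block'' conclusion from $(0011\cdots nn)^e$ to the full pattern $(0^{k+1}\cdots n^{k+1})^e$.

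For the containment, let $w$ be a word with exactly $k+1$ occurrences of each value in $[n^6+1]$. Following the proof of Theorem~\ref{prop: word with m repeats}, I would order the values by first occurrence and extract a value-monotone subsequence $a'_0, a'_1, \ldots, a'_{n^3}$ via Erd\H{o}s--Szekeres. The key step is to prove a strengthening of Claim~\ref{claim:main}: if there exists $j \in [0, n^3-n^2]$ such that \emph{every} value among $a'_j, \ldots, a'_{j+n^2}$ has at least one occurrence strictly after the first occurrence of $a'_{j+n^2}$, then selecting one such later occurrence per value and applying Erd\H{o}s--Szekeres yields pattern (b), by essentially the same argument as in the original claim. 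Otherwise, taking $j = (t-1)n^2$ for each $t \in [n]$, the hypothesis fails, and since $a'_{tn^2}$ itself has $k+1 \geq 2$ occurrences and thus has some occurrence past $F_{a'_{tn^2}}$, some $a'_{i_t}$ with $i_t \in [(t-1)n^2, tn^2-1]$ must instead have \emph{all} of its $k+1$ occurrences before the first occurrence of $a'_{tn^2}$. Chaining these blocks for $t = 1, \ldots, n$ and appending the block of $a'_{n^3}$ gives the subword $(a'_{i_1})^{k+1} \cdots (a'_{i_n})^{k+1} (a'_{n^3})^{k+1}$, which realises pattern (a).

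For the best-possible half, I would reuse the construction of Proposition~\ref{prop: bound is tight} with only the seed word changed: $t = 12\cdots n$ is replaced by $t = 1^k 2^k \cdots n^k$, while $p$, $r$, $q$, $s$ are defined by the same formulas. A routine count shows that the resulting word $s$ has length $(k+1)n^6$ and exactly $k+1$ occurrences of each value in $[n^6]$. The avoidance proof follows Section~\ref{sec: second proof} after suitably strengthening Lemma~\ref{claim: p and r concat containment}: $q$ still avoids $0^{k+1}1^{k+1}$ and $1^{k+1}0^{k+1}$ because every value has its unique ``first half'' occurrence in $p^{\ominus n^2}$ and all of its remaining $k$ occurrences in $(r^{\ominus n})^{\oplus n}$; and $q$ still avoids pattern (b) because the longest monotone subword of the modified $r$ remains of length $n$, so the same case split on $e_1 \in \{id, rev\}$ used in the original lemma closes both cases. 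The reductions from avoidance in $s$ to avoidance in $q$ are then identical to those in the original proof.

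The main obstacle will be the bookkeeping in this last step: one must verify carefully that replacing $t$ by $1^k 2^k \cdots n^k$ preserves all the structural properties exploited by the original avoidance argument, in particular that every value's $k+1$ occurrences remain confined to a single copy of $q$ inside $s$---this is precisely what reduces avoidance of pattern (b) in $s$ to avoidance of pattern (b) in $q$.
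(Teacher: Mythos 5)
Your containment half is correct and is exactly the adaptation the paper intends: the hypothesis of the strengthened claim can only fail at $j=(t-1)n^2$ at some index $i_t<tn^2$ (since $a'_{tn^2}$ itself has $k+1\ge 2$ occurrences and so trivially has an occurrence after its first), and such a value $a'_{i_t}$ then has \emph{all} $k+1$ of its occurrences before the first occurrence of $a'_{tn^2}$, which is precisely what upgrades the conclusion to $(0^{k+1}1^{k+1}\cdots n^{k+1})^e$.

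The extremal half, however, has a genuine gap, and in fact the modified construction fails for $k\ge 2$. The step that breaks is the (implicit) reduction opening part~\ref{claim-3} of Lemma~\ref{claim: p and r concat containment}: for $k=1$ every value occurs exactly once in $p^{\ominus n^2}$ and once in $r'=(r^{\ominus n})^{\oplus n}$, so in any occurrence of $(01\cdots n)^{e_1}(01\cdots n)^{e_2}$ the first half is forced into $p^{\ominus n^2}$ and the second half into $r'$. For $k\ge2$ each value has $k\ge 2$ occurrences inside $r'$, so one value of the pattern can contribute \emph{both} of its letters from $r'$; then neither half is confined to its block and your case split on $e_1$ does not apply. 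This is not merely a missing argument --- the modified $q$ genuinely contains a forbidden pattern. Take $n=k=2$, so $t=1122$, $r=t\ominus t=33441122$, $p=1234$, and
\[
q=p^{\ominus 4}\cdot(r^{\ominus 2})^{\oplus 2}
=13\,14\,15\,16\,9\,10\,11\,12\,5\,6\,7\,8\,1\,2\,3\,4\cdot 7\,7\,8\,8\,5\,5\,6\,6\,3\,3\,4\,4\,1\,1\,2\,2\,15\,15\,\cdots
\]
The subword taken at positions $13,15,17,18,25,29$ is $1,3,7,7,3,1$, which standardises to $012210=(012)^{id}(012)^{rev}$, i.e.\ pattern (b) with $n=2$; here $7$ is the exceptional value drawing both of its letters from $r'$. (One can show at most one value of an occurrence can be exceptional, but that single exception already suffices to realise the pattern.) So the lower bound requires a genuinely different construction or a modification of $r'$ that blocks this configuration; note that the paper's own one-sentence justification of this proposition glosses over exactly the same point, so this is a gap in the source as well as in your write-up.
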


In both settings, it would also be interesting to find other families of unavoidable patterns. 

\acknowledgements
\label{sec:ack}
This project started at the 2025  Permutation Patterns Pre-Conference Workshop, hosted by the University of St.\ Andrews, Scotland and funded by the International Science Partnerships Fund (ISPF) and the UK Research and Innovation [EP/Y000609/1]. We thank Reed Acton and Victoria Ironmonger for useful discussions during the workshop.

\nocite{*}
\bibliographystyle{abbrvnat}
\bibliography{refs}

@article{Erdos1935,
  author  = {Erd\H{o}s, Paul and Szekeres, George},
  year    = {1935},
  pages   = {463-470},
  title   = {A combinatorial problem in geometry},
  volume  = {2},
  journal = {Compositio Mathematica}
}

@article{Mor1984,
  author  = {Mor, Moshe and Fraenkel, Aviezri S.},
  year    = {1984},
  pages   = {101–112},
  title   = {{C}ayley permutations},
  volume  = {48},
  journal = {Discrete Mathematics},
  doi     = {10.1016/0012-365X(84)90136-5}
}

@incollection{Vatter2015,
  title     = {Permutation Classes},
  author    = {Vatter, Vincent},
  booktitle = {Handbook of Enumerative Combinatorics},
  pages     = {777--858},
  year      = {2015},
  publisher = {Chapman and Hall/CRC}
}

@article{Schensted1961, 
title={Longest Increasing and Decreasing Subsequences}, 
volume={13}, 
DOI={10.4153/CJM-1961-015-3}, 
journal={Canadian Journal of Mathematics}, 
author={Schensted, Craige}, 
year={1961}, 
pages={179–191}
}

@article{Itskovich2019,
author = {Itskovich, Elizabeth J. and Levit, Vadim E.},
year = {2019},
month = {11},
pages = {237},
title = {What Do a Longest Increasing Subsequence and a Longest Decreasing Subsequence Know about Each Other?},
volume = {12},
journal = {Algorithms},
doi = {10.3390/a12110237}
}

@article{Burstein2003,
author = {Burstein, Alexander and Hästö, Peter and Mansour, Toufik},
year = {2003},
month = {11},
pages = {R20},
title = {Packing Patterns into Words},
volume = {9},
journal = {The Electronic Journal of Combinatorics},
doi = {10.37236/1692}
}

@article{Chung1980,
author = {Chung, F.R.K.},
year = {1980},
month = {11},
pages = {267–279},
title = {On unimodal subsequences},
volume = {29},
journal = {Journal of Combinatorial Theory, Series A},
doi = {10.1016/0097-3165(80)90021-7}
}

@article{Xu2024,
      title={On $k$-modal subsequences}, 
      author={Zijian Xu},
      year={2024},
      journal={arxiv:2403.13686}, 
}

@article{Gong2025,
author = {Gong, Charles},
year = {2025},
title = {Three Generalizations of {E}rdős {S}zekeres: $k$-Modal Subsequences},
journal = {arxiv:2508.20360},
}

@article{Suk2016,
author = {Suk, Andrew},
year = {2017},
month = {04},
pages = {1047-1053},
title = {On the {E}rdős--{S}zekeres convex polygon problem},
volume = {30},
journal = {Journal of the American Mathematical Society},
doi = {10.1090/jams/869}
}

\end{document}